\newtheorem{theorem}{Theorem}
\newtheorem{lemma}[theorem]{Lemma}
\newtheorem*{MS}{Theorem A}
\newtheorem*{MSp}{Theorem B}
\begin{document}
\title{Upper bound for the Dvoretzky dimension in Milman-Schechtman theorem}
\date{}
\author{Han Huang, Feng Wei}
\maketitle

\begin{abstract}

For a symmetric convex body $K\subset\mathbb{R}^n$, the Dvoretzky dimension $k(K)$  is the largest dimension for which a random central section of $K$ is almost spherical. A Dvoretzky-type theorem proved by V.~D.~Milman in 1971 provides a lower bound for $k(K)$ in terms of the average $M(K)$ and the maximum $b(K)$ of the norm generated by $K$ over the Euclidean unit sphere.
Later, V.~D.~Milman and G. Schechtman obtained a matching upper bound for $k(K)$ in the case when $\frac{M(K)}{b(K)}>c(\frac{\log(n)}{n})^{\frac{1}{2}}$.
 In this paper, we will give an elementary proof  of the upper bound in Milman-Schechtman theorem which does not require any restriction on $M(K)$ and $b(K)$.

\end{abstract}

\begin{flushleft}

\section{Introduction}
 Given a symmetric convex body $K$ in $\mathbb{R}^n$, we have a corresponding norm  $\|x\|_K=\inf \{ r>0 \, , \, x \in rK\}$. Let $|\cdot|$ denote the Euclidean norm, $\nu_n$ denote the normalized Haar measure on the Euclidean sphere, $S^{n-1}$, and  $\nu_{n,k}$ denote the normalized Haar measure on the Grassmannian manifold $Gr_{n,k}$. Let  $M=M(K):=\int_{S^{n-1}}\|x\|_Kd\nu_n$ and $b=b(K):=\sup\{\|x\|_K\, , \, x\in S^{n-1}\}$ be  the mean and the maximum of the norm over the unit sphere.

 In 1971, V.~D.~Milman proved the following Dvoretzky-type theorem \cite{M}:
  \begin{theorem} \label{DM}
	Let $K$ be a symmetric convex body in $\mathbb{R}^n$. Assume that $\|x\|_K\le b|x|$ for all $x \in \mathbb{R}^n$. For any $\epsilon \in (0,1)$, there is  $k\ge C_\epsilon(M/b)^2n$ such that
	$$\nu_{n,k}\{ F\in G_{n,k}: (1-\varepsilon)M < \| \cdot \|_{K \cap F} < (1+\varepsilon)M \}>1-\exp(-\tilde{c} k)$$
where $\tilde{c}>0$ is a universal constant, $C_\epsilon>0$ is a constant only depending on $\epsilon$.
\end{theorem}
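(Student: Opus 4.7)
The plan is to combine three classical ingredients: L\'evy's concentration of measure on $S^{n-1}$, a volumetric $\epsilon$-net on $k$-dimensional subspheres, and rotational transfer to the Grassmannian via invariance of $\nu_{n,k}$.

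I would begin with the concentration step. Since $|\|x\|_K - \|y\|_K| \le \|x-y\|_K \le b|x-y|$, the norm $\|\cdot\|_K$ is $b$-Lipschitz on $(\mathbb{R}^n,|\cdot|)$, and L\'evy's inequality on $S^{n-1}$ provides a universal $c>0$ with
$$\nu_n\bigl\{x \in S^{n-1} : \bigl|\|x\|_K - M\bigr| > \tfrac{\epsilon}{2} M\bigr\} \le 2\exp\bigl(-cn(\epsilon M/b)^2\bigr).$$
Next, I would fix a reference $F_0 \in Gr_{n,k}$ and choose an $(\epsilon/4)$-net $\mathcal{N} \subset S^{n-1} \cap F_0$ of cardinality at most $(12/\epsilon)^k$. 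By rotation invariance, a $\nu_{n,k}$-random subspace has the law $U F_0$ for Haar-random $U \in O(n)$, and each $Uy$ with $y \in \mathcal{N}$ is uniformly distributed on $S^{n-1}$. A union bound against the concentration estimate gives
$$\nu_{n,k}\bigl\{F : \exists\, y \in \mathcal{N}\text{ with }\bigl|\|Uy\|_K - M\bigr| > \tfrac{\epsilon}{2} M\bigr\} \le 2(12/\epsilon)^k \exp\bigl(-cn(\epsilon M/b)^2\bigr),$$
so picking $k = C_\epsilon (M/b)^2 n$ with $C_\epsilon$ sufficiently small in terms of $\epsilon$ controls this by $\exp(-\tilde c k)$.

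The remaining step is to pass from the net to the whole subsphere $S^{n-1} \cap F$, using a ``successive approximation'' device. Set $A := \max\{\|x\|_K : x \in S^{n-1}\cap F\}$ and assume $\|y\|_K \le (1+\epsilon/2)M$ on the (rotated) net. For $x$ attaining the maximum, take a net-point $y$ within Euclidean distance $\epsilon/4$; since $(x-y)/|x-y|$ is a unit vector in $F$, its $\|\cdot\|_K$-norm is at most $A$, giving $\|x\|_K \le \|y\|_K + A\cdot \epsilon/4 \le (1+\epsilon/2)M + A\epsilon/4$. Taking supremum over $x$ yields $A(1-\epsilon/4) \le (1+\epsilon/2)M$, hence $A \le (1+\epsilon)M$ for small $\epsilon$. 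A symmetric computation produces the lower bound $(1-\epsilon)M$.

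The main obstacle I anticipate is precisely this extension step. A naive $b$-Lipschitz extension would demand an $(\epsilon M/b)$-net and would leave a spurious $\log(b/M)$ factor in the Dvoretzky dimension. The scale-invariant argument through $A$ is the one place where homogeneity of $\|\cdot\|_K$ is needed beyond Lipschitz continuity, and it is what delivers the clean lower bound $k \ge C_\epsilon (M/b)^2 n$ without logarithmic loss.
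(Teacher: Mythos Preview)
The paper does not actually prove Theorem~\ref{DM}; it is quoted as a known background result due to V.~D.~Milman (reference \cite{M}), and the paper's own contributions begin with Lemma~\ref{Le} and Theorem~\ref{Te}. So there is no ``paper's proof'' to compare against here.

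That said, your proposal is correct and is precisely the classical concentration-plus-net argument of Milman. The three ingredients you list---L\'evy concentration for the $b$-Lipschitz map $x\mapsto\|x\|_K$, an $\epsilon$-net on $S^{n-1}\cap F$ of cardinality $(C/\epsilon)^k$, and the scale-invariant passage from net to subsphere via the quantity $A=\max_{x\in S^{n-1}\cap F}\|x\|_K$---are exactly the steps in the original proof, and your remark about the spurious $\log(b/M)$ factor one would incur by using only the $b$-Lipschitz bound in the extension step is on point. Note that the constants you obtain give $C_\epsilon$ of order $\epsilon^2/\log(1/\epsilon)$, which matches Milman's original dependence; the paper mentions that Gordon and Schechtman later removed the logarithm, but that refinement requires a different (Gaussian comparison or chaining) argument and is not what your outline produces.
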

The quantity $C_\epsilon$ was of the order $\epsilon^2\log^{-1}(\frac{1}{\epsilon})$  in the original proof of V.~D.~Milman. It was improved to the order of $\epsilon^2$ by Y. Gordon \cite{YG} and later, with a simpler argument, by G.~Schechtman \cite{GS}.

In 1997, V.~D.~Milman and G.~Schechtman  \cite{MS} found that the bound on $k$ appearing in Theorem \ref{DM} is essentially optimal. More precisely, they proved the following theorem.

  \begin{MS}
  (Milman--Schechtman, see e.g., section 5.3 in \cite{AGA}).
  Let $K$ be a symmetric convex body in $\mathbb{R}^n$. For $\epsilon \in (0,1)$, define $k(K)$ to be the largest dimension $k$ such that
	$$
\nu_{n,k}\left(\{ F\in G_{n,k}: \forall x\in S^{n-1}\cap F \, , \,  (1-\varepsilon)M < \| x \|_{K} < (1+\varepsilon)M \}\right)>p_{n,k}=\frac{n}{n+k}.
	$$
	Then,
	$$
		\tilde{C}_\epsilon n(M/b)^2 \ge k(K)\ge \bar{C}_{\epsilon} n(M/b)^2
	$$
	when $\frac{M}{b}>c(\frac{\log(n)}{n})^{\frac{1}{2}}$ for some universal constant $c$, where $\|\cdot\|_F$ denotes the norm corresponding to the convex body $K\cap F$ in $F$, and  $\tilde{C}_\epsilon, \bar{C}_{\epsilon}>0$ are constants depending only on $\epsilon$.
\end{MS}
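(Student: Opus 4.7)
\emph{Strategy and lower bound.} The two estimates require separate arguments. The lower bound follows almost directly from Theorem~\ref{DM}. Take $k=\lfloor \bar{C}_\epsilon (M/b)^2 n\rfloor$ with $\bar{C}_\epsilon$ a sufficiently small constant depending on $\epsilon$, so that in particular $k\le C_\epsilon (M/b)^2 n$. Theorem~\ref{DM} then provides
\begin{equation*}
\nu_{n,k}\{F\in G_{n,k}:(1-\epsilon)M<\|\cdot\|_{K\cap F}<(1+\epsilon)M\}>1-\exp(-\tilde c k).
\end{equation*}
The hypothesis $M/b>c(\log n/n)^{1/2}$ forces $k\gtrsim \log n$, and enlarging $c$ (equivalently shrinking $\bar{C}_\epsilon$) if necessary arranges $\exp(-\tilde c k)\le k/(n+k)=1-p_{n,k}$. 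Hence the left-hand side strictly exceeds $p_{n,k}$, and $k(K)\ge \bar{C}_\epsilon (M/b)^2 n$ follows.

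\emph{Upper bound.} The plan is to reduce the spherical event to a projection event on $G_{n,k}$ and then apply a $\chi^2$ lower-tail estimate. Fix a contact direction $z_0\in S^{n-1}$ with $\|z_0\|_K=b$, and for $F\in G_{n,k}$ write $z_0=P_F z_0+P_{F^\perp}z_0$. If $F$ lies in the good set---those $F$ with $\|x\|_K\le (1+\epsilon)M$ for every $x\in S^{n-1}\cap F$---then the triangle inequality together with the global estimate $\|\cdot\|_K\le b|\cdot|$ yields
\begin{equation*}
b=\|z_0\|_K\le (1+\epsilon)M\,|P_F z_0|+b\sqrt{1-|P_F z_0|^2},
\end{equation*}
and the elementary inequality $1-\sqrt{1-t^2}\ge t^2/2$ then forces $|P_F z_0|\le 2(1+\epsilon)M/b$. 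Consequently the good set is contained in $\mathcal{E}:=\{F\in G_{n,k}:|P_F z_0|\le 2(1+\epsilon)M/b\}$. For $F$ uniform in $G_{n,k}$, $|P_F z_0|^2$ has a $\mathrm{Beta}(k/2,(n-k)/2)$ distribution---equivalently, the same law as $\sum_{i\le k}g_i^2/\sum_{i\le n}g_i^2$ for i.i.d.\ standard Gaussians $g_i$, with mean $k/n$. Choose $\tilde{C}_\epsilon$ large enough (say $\tilde{C}_\epsilon\ge 16(1+\epsilon)^2$) that $k>\tilde{C}_\epsilon (M/b)^2 n$ implies $(2(1+\epsilon)M/b)^2\le k/(4n)$; a standard $\chi^2$ lower-tail estimate then gives $\nu_{n,k}(\mathcal{E})\le \exp(-c'k)$ for some $c'=c'(\epsilon)>0$. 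Under the hypothesis $M/b>c(\log n/n)^{1/2}$ we have $k\ge \tilde{C}_\epsilon c^2 \log n$, and for $\tilde{C}_\epsilon$ sufficiently large this bound lies below $n/(n+k)=p_{n,k}$. Combined with the inclusion of the good set in $\mathcal{E}$, this forces $k(K)\le \tilde{C}_\epsilon (M/b)^2 n$.

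\emph{Main obstacle.} The triangle-inequality reduction is short and elementary and already produces the sharp $(M/b)^2 n$ scaling on both sides. The real technical content lies in the joint calibration of the two probabilistic tails against the threshold $p_{n,k}=n/(n+k)$: pushing $\exp(-\tilde c k)$ below $1-p_{n,k}=k/(n+k)$ on the lower side and the $\chi^2$ lower tail $\exp(-c'k)$ below $p_{n,k}$ on the upper side. Both calibrations require $k$ to be at least of order $\log n$, which is exactly the role of the hypothesis $M/b>c(\log n/n)^{1/2}$; this is also the reason why the upper direction, as stated, carries the same restriction, and it pins down the final constants $\bar{C}_\epsilon$ and $\tilde{C}_\epsilon$.
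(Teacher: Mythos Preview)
The paper does not give its own proof of Theorem~A; it is cited as the Milman--Schechtman result from \cite{MS,AGA}, and the paper's contribution is the closely related Theorem~B (threshold $1/2$ instead of $n/(n+k)$), proved via Lemma~\ref{Le} and Theorem~\ref{Te}. Your argument is correct and close in spirit to that material, with one genuine difference in the reduction step. You split $z_0=P_Fz_0+P_{F^\perp}z_0$ and use the triangle inequality to obtain $|P_Fz_0|\le 2(1+\epsilon)M/b$. The paper instead notes that if $\|z_0\|_K=b$ then the inscribed ball of radius $1/b$ touches $\partial K$ at $z_0/b$, so $K\subset\{x:|\langle x,z_0\rangle|\le 1/b\}$ and hence $\|x\|_K\ge b|\langle x,z_0\rangle|$ for every $x$; this gives $|P_Fz_0|<(1+\epsilon)M/b$ directly, with no factor $2$ and no appeal to the triangle inequality. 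After that, your Beta/$\chi^2$ lower-tail bound and the paper's spherical-concentration Lemma~\ref{Le} are equivalent.

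One further remark on your ``main obstacle'' paragraph: you invoke the hypothesis $M/b>c(\log n/n)^{1/2}$ on the upper side to force $\exp(-c'k)<p_{n,k}$. But $p_{n,k}=n/(n+k)\ge 1/2$ always, so you only need $k$ larger than an absolute constant; and since $M/b\ge c'/\sqrt n$ for every symmetric convex body (the paper cites Remark~5.2.2(iii) of \cite{AGA}), the assumption $k>\tilde C_\epsilon(M/b)^2n$ already yields $k\ge\tilde C_\epsilon(c')^2$. Thus the $\log n$ restriction is in fact irrelevant for the upper bound---this is exactly the point of the paper's Theorem~\ref{Te} and why Theorem~B can be stated with no restriction on $M/b$. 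The restriction is genuinely needed only on the lower side, to push $\exp(-\tilde ck)$ below $k/(n+k)$.
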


Because the Dvoretzky-Milman theorem cannot guarantee the lower bound with small $\frac{M}{b}$ for $p_{n,k}=\frac{n}{n+k}$, the original proof required an assumption that $\frac{M}{b}>c(\frac{\log(n)}{n})^{\frac{1}{2}}$ for some $c$. In \cite[p. 197]{AGA}, S.~Artstein-Avidan, A.~A.~Giannopoulos, and V.~D.~Milman  addressed it as an open question whether one can prove the same result when $p_{n,k}$ is a constant, such as $\frac{1}{2}$. When $p_{n,k}=\frac{1}{2}$, the lower estimate on $k(K)$ is a direct result of Dvoretzky-Milman theorem \cite{M}, but the upper bound was unknown. In this paper, we are going to give upper bound estimate with $p_{n,k}=\frac{1}{2}$, our main result is the following theorem:
\begin{MSp} Let $K$ be a symmetric convex body in $\mathbb{R}^n$. Fix a constant $\epsilon \in (0,1)$, let $k(K)$ be the largest dimension such that
	$$
\nu_{n,k}\{ F\in G_{n,k}: (1-\varepsilon)M < \| \cdot \|_{K \cap F} < (1+\varepsilon)M \}>\frac{1}{2}.
	$$
	Then,
	$$
	   C n(M/b)^2 \ge k(X)\ge \bar{C}_{\epsilon} n(M/b)^2
	$$
	where $C>0$ is a universal constant and $\bar{C}_{\epsilon}>0$ is a constant depending only on $\epsilon$.
\end{MSp}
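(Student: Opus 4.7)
The theorem contains both a lower bound and a universal upper bound on $k(K)$, and I plan to treat them separately. The lower bound $k(K)\ge \bar{C}_\epsilon n(M/b)^2$ is immediate from Theorem~\ref{DM}: choosing $k=\lfloor C_\epsilon(M/b)^2n\rfloor$, the good-subspace probability in that theorem is at least $1-e^{-\tilde c k}$, which exceeds $1/2$ as soon as $k$ is past a small absolute constant (the bound being vacuous otherwise after shrinking $\bar C_\epsilon$). The substantive content is the universal upper bound $k(K)\le Cn(M/b)^2$, which I approach by a direct projection argument that never requires $M/b$ to be large.

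Fix $x_0\in S^{n-1}$ realizing the maximum, $\|x_0\|_K=b$, and for a Haar-random $F\in G_{n,k}$ set $t=|P_F x_0|$ and $y_F=P_F x_0/t\in S^{n-1}\cap F$ whenever $t>0$. The triangle inequality combined with the pointwise bound $\|z\|_K\le b|z|$ yields
\begin{equation*}
b=\|x_0\|_K\le \|P_F x_0\|_K+\|P_{F^\perp}x_0\|_K\le t\,\|y_F\|_K+b\sqrt{1-t^2},
\end{equation*}
so that $\|y_F\|_K\ge b(1-\sqrt{1-t^2})/t\ge bt/2$ by the elementary inequality $\sqrt{1-u}\le 1-u/2$ on $[0,1]$.

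Because $t^2$ has a $\mathrm{Beta}(k/2,(n-k)/2)$ distribution of mean $k/n$, a uniform median estimate (Gaussian concentration when $k$ is beyond an absolute constant, and a direct inspection of the Beta density for very small $k$) produces a universal $c_0>0$ with $\Pr(t\ge c_0\sqrt{k/n})\ge 1/2$. Combined with the previous step, $\|y_F\|_K\ge (c_0 b/2)\sqrt{k/n}$ with probability at least $1/2$. Setting $C:=16/c_0^2$, every $k>Cn(M/b)^2$ forces this lower bound to exceed $2M$, which in turn exceeds $(1+\epsilon)M$ for \emph{every} $\epsilon\in(0,1)$. Hence at least half of all $F\in G_{n,k}$ violate the condition defining $k(K)$, giving the desired $k(K)\le Cn(M/b)^2$ with $C$ independent of both $\epsilon$ and $M/b$.

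The main technical subtlety is the uniform-in-$k$ median estimate for $|P_F x_0|$: concentration for the Beta law degrades when $k$ is of order one, so I expect to patch the regimes $k\lesssim 1$ and $k\gtrsim 1$ by separate arguments and then extract a single universal $c_0$. The reason the restriction $M/b>c(\log n/n)^{1/2}$ of the original Milman--Schechtman proof disappears is that the only probabilistic input is a lower bound on the \emph{median} of a single projection length, a statement that holds universally for any $1\le k\le n$ and never requires a net argument on $S^{n-1}\cap F$; absorbing the bounded factor $(1+\epsilon)<2$ into the constant is what makes $C$ universal.
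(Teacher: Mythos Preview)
Your proof is correct and follows the same overall strategy as the paper: project the norm-maximizer $x_0$ onto a random $k$-subspace $F$, show that the normalized projection $y_F$ has $\|y_F\|_K\gtrsim b\,|P_F x_0|$, and then use that $|P_F x_0|$ is typically of order $\sqrt{k/n}$ to contradict the ``good subspace'' condition when $k\gg n(M/b)^2$. The two arguments differ only in how they obtain the pointwise norm lower bound. The paper observes that since $\tfrac{1}{b}B_2^n\subset K$ and the two bodies touch at $x_0/b$, the slab $\{|\langle x,x_0\rangle|\le 1/b\}$ contains $K$; hence $\|x\|_K\ge b|\langle x,x_0\rangle|$ for \emph{every} $x$, and taking $x=y_F$ gives $\|y_F\|_K\ge b\,|P_F x_0|$ with no loss. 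Your triangle-inequality route $b\le t\|y_F\|_K+b\sqrt{1-t^2}$ is a bit more elementary (it bypasses the contact-point/supporting-hyperplane geometry) but costs a factor of~$2$ in the constant. For the probabilistic input, the paper packages the median estimate for $|P_F x_0|$ as Lemma~\ref{Le} via Lipschitz concentration on $S^{n-1}$, which automatically handles the uniform-in-$k$ constant you flag as the main technical point; your Beta-distribution approach would do the same job. The lower bound via Theorem~\ref{DM} is treated identically in both.
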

In the next section, we will provide a proof of Theorem B with no restriction on $\frac{M}{b}$. In fact, from the proof, one can see that $\frac{1}{2}$ can be replaced by any $c \in (0,1)$ or $1-\exp(-\tilde ck)$, which is the probability appearing in Milman-Dvoretzky theorem.

\section{Proof of Theorem B}

Let $P_k$ be the orthogonal projection from $S^{n-1}$ to some fixed $k$-dimensional subspace, and $|\cdot|$ be the Euclidean norm. The upper estimate is related to the distribution of $|P_k(x)|$, where $x$ is uniformly distributed on $S^{n-1}$ .\\

Recall the concentration inequality for Lipschitz functions on the sphere (see, e.g., \cite{D}):
\begin{theorem}[Measure Concentration on $S^{n-1}$] \label{DL}
	 Let $f : S^{n-1} \rightarrow \mathbb{R}$ be a Lipschitz continuous function with Lipschitz constant $b$. Then, for every $t>0$,
	$$
    \nu_n(\{x\in S^{n-1} : | f(x) -\mathbb{E}(f)|\ge bt\})\le 4 \exp(-c_0t^2n)
	$$
	where $c_0>0$ is a universal constant.
\end{theorem}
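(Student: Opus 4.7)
The plan is to derive the stated deviation inequality from L\'evy's spherical isoperimetric inequality, first obtaining concentration around the median of $f$ and then converting it to concentration around the mean.

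The starting point is L\'evy's isoperimetric inequality on $S^{n-1}$: among all Borel sets $A$ with prescribed measure, the geodesic $t$-neighborhood $A_t:=\{x\in S^{n-1}:d_g(x,A)<t\}$ has minimal measure when $A$ is a spherical cap. When $\nu_n(A)\ge 1/2$, this yields the quantitative tail bound $\nu_n(S^{n-1}\setminus A_t)\le \sqrt{\pi/8}\,\exp(-(n-1)t^2/2)$. I would either invoke this as a known geometric fact or sketch its proof via two-point spherical symmetrization.

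Let $M_f$ denote a median of $f$ and set $A^-=\{f\le M_f\}$, $A^+=\{f\ge M_f\}$, both of measure at least $1/2$. Since the Euclidean distance is bounded above by the geodesic distance on $S^{n-1}$, $f$ remains $b$-Lipschitz with respect to $d_g$. Any $x\in A^-_t$ satisfies $d_g(x,y)<t$ for some $y\in A^-$, so $f(x)\le f(y)+bt\le M_f+bt$; the symmetric argument for $A^+$ gives $\{|f-M_f|\ge bt\}\subset (S^{n-1}\setminus A^-_t)\cup (S^{n-1}\setminus A^+_t)$. Combined with L\'evy's inequality this yields $\nu_n(|f-M_f|\ge bt)\le 2\sqrt{\pi/8}\,\exp(-(n-1)t^2/2)$, which is exactly the desired tail bound relative to the median, up to a universal constant in the exponent.

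Finally, I would pass from median to mean. Integrating the tail,
$$|\mathbb{E}(f)-M_f|\le \mathbb{E}|f-M_f|=\int_0^\infty \nu_n(|f-M_f|\ge s)\,ds\le \frac{Cb}{\sqrt{n}}$$
for some absolute constant $C$. For $t\ge 2C/\sqrt{n}$, the event $\{|f-\mathbb{E}(f)|\ge bt\}$ is contained in $\{|f-M_f|\ge bt/2\}$, which the preceding inequality controls by a quantity of the form $2\sqrt{\pi/8}\,\exp(-c_0 nt^2)$; for $t<2C/\sqrt{n}$ the claim is vacuous once the leading factor $4$ and the constant $c_0$ in the conclusion are chosen so that $4\exp(-c_0 t^2 n)\ge 1$ in this range. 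The main obstacle is L\'evy's isoperimetric inequality itself, whose proof via two-point spherical symmetrization (or Schmidt's original method) is classical but genuinely nontrivial; everything after that is a routine unpacking of the isoperimetric bound into a concentration inequality together with a standard median-to-mean transfer.
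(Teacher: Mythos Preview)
Your argument is correct and follows the classical route (L\'evy isoperimetry $\Rightarrow$ concentration about the median $\Rightarrow$ median-to-mean transfer), but note that the paper does not actually prove this theorem: it is quoted as a known result with a reference to the Milman--Schechtman monograph. There is therefore nothing in the paper to compare against beyond observing that your outline is precisely the standard proof one finds in that reference, so your proposal is appropriate and there is no discrepancy to flag.
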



Theorem \ref{DL} implies the following elementary lemma.
\begin{lemma} \label{Le}
	Fix any $c_1>0$, let $P_k$ be an orthogonal projection from $\mathbb{R}^n$ to some subspace $\mathbb{R}^k$.  If $t>\frac{c_1}{\sqrt{n}}$ and $\nu_n(\{ x \in S^{n-1} \, : \, |P_k(x)|<t\})>\frac{1}{2} $, then $k< c_2t^2n$, where $c_2>0$ is a constant depending only on $c_1$.

\end{lemma}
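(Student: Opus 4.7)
The plan is to treat $f(x) := |P_k(x)|$ as a $1$-Lipschitz function on $S^{n-1}$ (the Lipschitz estimate $\big||P_kx|-|P_ky|\big| \le |P_k(x-y)| \le |x-y|$ is immediate), so Theorem \ref{DL} applies with $b=1$. The idea is to sandwich $\mathbb{E}f$ between an upper bound coming from the measure hypothesis and a lower bound coming from the second moment identity on the sphere; comparing the two forces $k$ to be small.

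First I would control the second moment: since $\mathbb{E}_{\nu_n} x_i^2 = 1/n$, one has $\mathbb{E}f^2 = k/n$. Integrating the concentration tail of Theorem \ref{DL} against $2s\,ds$ gives $\mathrm{Var}(f) \le C_0/n$ for a universal constant $C_0$, hence
$$(\mathbb{E}f)^2 = \mathbb{E}f^2 - \mathrm{Var}(f) \ge \frac{k}{n} - \frac{C_0}{n}.$$

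Next I would convert the hypothesis $\nu_n(\{f<t\}) > 1/2$ into an upper bound on $\mathbb{E}f$. If $\mathbb{E}f \ge t+s$, then $\{f<t\} \subset \{|f-\mathbb{E}f|\ge s\}$, which by Theorem \ref{DL} has measure at most $4\exp(-c_0 s^2 n)$; choosing $s$ so that this quantity falls below $1/2$ contradicts the hypothesis and forces $\mathbb{E}f \le t + C_1/\sqrt{n}$ for a universal $C_1$.

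Combining the two estimates yields $\sqrt{k/n - C_0/n} \le t + C_1/\sqrt{n}$, so after squaring and multiplying by $n$,
$$k \le t^2 n + 2 C_1 t\sqrt{n} + C_1^2 + C_0.$$
The final step, and the only place the hypothesis $t > c_1/\sqrt{n}$ actually gets used, is absorbing the cross term and the additive constants into a single multiple of $t^2 n$: from $t\sqrt{n} > c_1$ one has $t\sqrt{n} \le t^2 n / c_1$ and $1 \le t^2 n / c_1^2$, so each error term on the right is dominated by a $c_1$-dependent multiple of $t^2 n$, giving $k \le c_2 t^2 n$. I expect this absorption step to be the only mildly delicate point, and it also explains why the lower bound $t > c_1/\sqrt{n}$ is essential: without it the additive $O(1)$ error could not be controlled by $t^2 n$.
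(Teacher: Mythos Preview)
Your proof is correct and follows essentially the same approach as the paper: both treat $|P_k|$ as a $1$-Lipschitz function, use concentration once to bound the variance and hence obtain $(\mathbb{E}|P_k|)^2 \ge k/n - O(1/n)$, and use concentration a second time to compare $\mathbb{E}|P_k|$ with $t$. The only cosmetic difference is packaging---the paper finishes by contradiction (assuming $k > 8t^2 n$ and showing $\nu_n(|P_k|<t) < 1/2$), whereas you derive $\mathbb{E}f \le t + C_1/\sqrt{n}$ directly and absorb the lower-order terms using $t\sqrt{n} > c_1$.
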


\begin{proof}[Proof]
 $|P_{k}(x)|$ is a 1-Lipschitz function on $S^{n-1}$ with $\mathbb{E}|P_{k}(x)|$ about $\sqrt{\frac{k}{n}}$. If we want the measure of $\{ x \, :\, |P_k(x)|<t\}$ to be greater than $1/2$, then measure concentration will force $\mathbb{E}|P_{k}|$ to be bounded by the size of $t$, which means $k<c_2t^2n$ for some universal constant $c_2$. Since $t^2n>c_1^2$, we may and shall assume $k$ is bigger than some absolute constant in our proof, then adjust $c_2$.\\

\vspace{0.1in}
To make it precise, we will first give a lower bound on $\mathbb{E}|P_k|$. By Theorem \ref{DL},
	$$		\mathbb{\nu}_n( ||P_k(x)|-\mathbb{E}|P_k(x)||^2>t)\le 4\exp(-c_0tn).
	$$
Thus,
\begin{eqnarray*}
\mathbb{E}|P_k|^2-(\mathbb{E}|P_k|)^2
&=&\mathbb{E}(|P_k|(x)-\mathbb{E}|P_k|)^2\\
&<&\int_0^\infty \mathbb{\nu}_n( ||P_k(x)|-\mathbb{E}|P_k(x)||^2>t)dt\\
&\le& \int_0^\infty 4\exp(-c_0tn)dt=\frac{4}{c_0n}.\\
\end{eqnarray*}
		
With  $\mathbb{E}|P_k|^2=\mathbb{E}\sum_{i=1}^k |x_i|^2=\frac{k}{n}$, we get $\mathbb{E}(|P_k|)>\sqrt{\frac{k}{n}-\frac{4}{c_0n}}$. If we assume that $k>\frac{24}{c_0}$, then we have
$$
	\mathbb{E}(|P_k|)>\sqrt{\frac{1}{2}\frac{k}{n}}.
$$
Assuming $k> 8t^2n$, we have
\begin{eqnarray*}
	\mathbb{E}(|P_k|)-t>  \sqrt{\frac{1}{2}\frac{k}{n}}-t\ge\frac{1}{2}\sqrt{\frac{1}{2}\frac{k}{n}}>0.
\end{eqnarray*}
Applying Theorem \ref{DL} again, we obtain
\begin{eqnarray*}
\mathbb{\nu}_n(|P_k|<t)&<&
\mathbb{\nu}_n\left(\left||P_k|-\mathbb{E}|P_k|\right|>\mathbb{E}(|P_k|)-t\right)\le 4\exp(-c_0(\mathbb{E}(|P_k|)-t)^2n)\\
&\le&4\exp(-c_0(\frac{1}{2}\sqrt{\frac{1}{2}\frac{k}{n}})^2n)\le 4\exp(-\frac{c_0}{8}k)\le 4\exp(-3) <\frac{1}{2},
\end{eqnarray*}
which proves our result by contradiction.
\end{proof}

\vspace{0.2in}

\begin{theorem} \label{Te}
Let $K$ be a convex body with inradius $\frac{1}{b}$. For $\epsilon \in (0,1)$, let $k$ be the largest integer such that
$$
\nu_{n,k}\{ F\in G_{n,k}: (1-\varepsilon)M < \| \cdot \|_{K \cap F} < (1+\varepsilon)M \}>\frac{1}{2}.
$$
Then $ k <Cn( \frac{M}{b} )^2 $ where C is an absolute constant.
\end{theorem}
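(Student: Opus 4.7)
The plan is to reduce the upper estimate to Lemma \ref{Le} by testing every good subspace against a single fixed direction where $\|\cdot\|_K$ attains its maximum. First I would fix $z \in S^{n-1}$ with $\|z\|_K = b$ (which exists by compactness). For any $F$ in the good event with $P_F(z) \neq 0$, the unit vector $P_F(z)/|P_F(z)|$ lies in $F$, so the good-event hypothesis gives
$$
\|P_F(z)\|_K \le (1+\epsilon) M\, |P_F(z)|.
$$
On the other hand, the triangle inequality combined with $\|\cdot\|_K \le b|\cdot|$ yields
$$
\|P_F(z)\|_K \ge \|z\|_K - \|P_{F^\perp}(z)\|_K \ge b\bigl(1 - \sqrt{1-|P_F(z)|^2}\bigr).
$$
Writing $s = |P_F(z)|$ and using $1-\sqrt{1-s^2} = s^2/(1+\sqrt{1-s^2})$, these two inequalities combine to give $s \le 2(1+\epsilon)M/b \le 4M/b$ for every good $F$ (the case $s=0$ is trivial).

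Next I would exploit rotational invariance: for the fixed $z \in S^{n-1}$, the distribution of $|P_F(z)|$ with $F \sim \nu_{n,k}$ coincides with the distribution of $|P_k(x)|$ with $x \sim \nu_n$ and $P_k$ a fixed orthogonal projection onto a $k$-dimensional subspace. Since the good event has measure exceeding $1/2$, this translates to
$$
\nu_n\bigl(\{x \in S^{n-1} : |P_k(x)| < 4M/b\}\bigr) > \tfrac{1}{2},
$$
and Lemma \ref{Le} applied with $t = 4M/b$ then yields $k < c_2 (4M/b)^2 n$, which is the claimed bound with an absolute constant $C = 16 c_2$.

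The step I expect to be the main conceptual obstacle is the geometric bound on $|P_F(z)|$ in the first paragraph: it is the one place where one must simultaneously use the upper bound on $\|\cdot\|_K$ along the good subspace $F$ and the extremal behavior of $\|\cdot\|_K$ globally on $S^{n-1}$; notably, this argument is completely insensitive to the size of $M/b$, which is exactly why the proof dispenses with the restriction $M/b \ge c\sqrt{\log n / n}$ present in the original Milman--Schechtman argument. A minor technical wrinkle is the regime $4M/b \le c_1/\sqrt{n}$, where the hypothesis of Lemma \ref{Le} fails; there one argues directly that the median bound together with the Gaussian-type concentration of $|P_k|$ forces $\mathbb{E}|P_k|^2 = k/n$ to be $O(1/n)$, so $k$ is bounded by an absolute constant that can be absorbed into $C$.
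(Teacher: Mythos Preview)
Your proposal is correct and follows exactly the same architecture as the paper's proof: show that every good subspace $F$ satisfies $|P_F(z)| \lesssim M/b$ for the fixed extremal direction $z$, transfer by rotational invariance to the distribution of $|P_k(x)|$ on $S^{n-1}$, and invoke Lemma~\ref{Le}. The two differences are in execution only. First, the paper obtains the geometric bound more directly: from $\|z\|_K = b$ together with the inradius hypothesis one has the slab inclusion $K \subset \{x : |\langle x,z\rangle| \le 1/b\}$, hence $\|x\|_K \ge b\,|\langle x,z\rangle|$ for every $x$; taking the supremum over $x \in F\cap S^{n-1}$ then gives $|P_F(z)| = \sup_{x\in F\cap S^{n-1}}\langle x,z\rangle < (1+\epsilon)M/b$ in one line, bypassing your triangle-inequality detour and the extra factor of~$2$. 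Second, for the small-$M/b$ wrinkle the paper simply quotes the universal lower bound $M/b \ge c'/\sqrt{n}$ (Remark~5.2.2(iii) in \cite{AGA}), so the hypothesis $t > c_1/\sqrt{n}$ of Lemma~\ref{Le} is automatically satisfied with $c_1 = c'$ and no separate case is needed.
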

\begin{proof} We may assume $\|e_1\|_K=b$, then $K \subset S=\{x\in \mathbb{R}^n: |x_1|<\frac{1}{b}\}$, thus $ \|x\|_K \ge \|x\|_S=b|\langle x,e_1\rangle|$.
This implies
\begin{equation}\label{bd}
\begin{array}{rl}
	&\{ V \in G_{n,k} \, : \, \forall x \in V\cap S^{n-1}\, , \,    (1-\epsilon)M<\|x\|_K<(1+\epsilon)M\}\\
	
	\subset&\{ V \in G_{n,k} \, : \, \forall x \in V\cap S^{n-1}\, , \,  \|x\|_S<(1+\epsilon)M\}\\
	=&\{ V \in G_{n,k} \, : \,  \sup_{x \in V\cap S^{n-1}}\langle x,e_1\rangle<(1+\epsilon)\frac{M}{b}\}\\
	=&\{ V \in G_{n,k} \, : \,  |P_V(e_1)|<(1+\epsilon)\frac{M}{b}\}\\
\end{array}
\end{equation}
where $P_V$ is the orthogonal projection from $\mathbb{R}^n$ to $V$. If $V$ is uniformly distributed on $G_{n,k}$ and $x$ is uniformly distributed on $S^{n-1}$, then $|P_{V_0}(x)|$ and $|P_{V}(e_1)|$ are equi-distributed for any fixed $k$-dimensional subspace $V_0$. Therefore,
$$
	\nu_{n,k}(\{ V \in G_{n,k} \, : \,  |P_V(e_1)|<(1+\epsilon)\frac{M}{b}\})=\nu_n(\{ x \in S^{n-1} \, : \, |P_{V_0}(x)|<(1+\epsilon)\frac{M}{b}\}).
$$

As shown in the Remark 5.2.2(iii) of \cite[p. 164]{AGA}, the ratio $\frac{M}{b}$ has a lower bound $\frac{c'}{\sqrt{n}}$. Setting $c_1=c'$ and $t=(1+\epsilon)\frac{M}{b}$, it is easy to see that if
$$
\nu_{n,k}\{ F\in G_{n,k}: (1-\varepsilon)M < \| \cdot \|_{K \cap F} < (1+\varepsilon)M \}>\frac{1}{2},
$$
then $k\le c_1(1+\epsilon)^2\left( \frac{M}{b}\right)^2n<Cn( \frac{M}{b} )^2$ by Lemma \ref{Le} and  (\ref{bd}).

\end{proof}

Now we can prove Theorem B as a corollary of Theorem \ref{Te} and Theorem \ref{DM}:
\begin{proof}[Proof of Theorem B]
Theorem \ref{DM} shows that if $C_\epsilon(M/b)^2n>\frac{\log(2)}{\tilde{c}}$, then there is $k\ge C_\epsilon(M/b)^2n$ such that
	$$
\nu_{n,k}\{ F\in G_{n,k}: (1-\varepsilon)M < \| \cdot \|_{F} < (1+\varepsilon)M \}>1-\exp(-\tilde{c} k)>\frac{1}{2}.
	$$
Otherwise, $(M/b)^2n<\frac{\log(2)}{\tilde{c} C_\epsilon}$. Therefore, $k(K)\ge \min \{\frac{\tilde{c} C_\epsilon}{\log(2)}, C_\epsilon \}(M/b)^2n$. Combining it with Theorem \ref{Te}, we get

	$$
		 C( \frac{M}{b} )^2n	\ge k(K)\ge \min \{\frac{\tilde{c} C_\epsilon}{\log(2)}, C_\epsilon \}(M/b)^2n.
	$$

	\end{proof}

Remark. (1) It is worth noticing that the number $\frac{1}{2}$ plays no special role in our proof. Thus, if we define the Dvoretzky dimension to be the largest dimension such that
$$
\nu_{n,k}\{ F\in G_{n,k}: (1-\varepsilon)M < \| \cdot \|_{K \cap F} < (1+\varepsilon)M \}>c
$$
for some $c \in (0,1)$, then exactly the same proof will work. We will still have $k(K) \sim (\frac{M}{b})^2n$.
Similarly, if we fix $\epsilon$ and replace $\frac{1}{2}$ by $1-\exp(-\tilde ck)$, then the lower bound of $k(K)$ is the one from Theorem \ref{DM}.
For $k$ bigger than some absolute constant, we have $1-\exp(-\tilde c k)>\frac{1}{2}$. Thus, the upper bound is still of order $\left(\frac{M}{b}\right)^2n$. Therefore, we can replace $\frac{1}{2}$ by $1-\exp(-\tilde c k)$ in Theorem A. With this probability choice, it also shows Theorem \ref{DM} provides an optimal $k$ depending on $M,b$.\\
\vspace{0.1in}

(2) Usually, we are only interested in $\epsilon \in (0,1)$. In the lower bound,  $\bar{C}_\epsilon=o_{\epsilon}(1)$. It is a natural question to ask if we could improve the upper bound from a universal constant $C$ to $o_{\epsilon}(1)$. Unfortunately, it is not possible due to the following observation.
Let $K={\rm conv}(B_2^n,Re_1)^{\circ}$. By passing from the intersection on $K$ to the projection of $K^{\circ}$, one can show that $k(K)$ does not exceed the maximum dimension $k$ such that $\nu_n (P_{k}(Rx)<1+\epsilon)>\frac{1}{2}$. Choosing $R=\sqrt \frac{n}{l}$, we get $n(\frac{M}{b})^2\sim l$ and $k(X)\sim l$ by Theorem \ref{DL} and a similar argument to that of Lemma \ref{Le}. This example shows that no matter what $\frac{M}{b}$ is, one can not improve the upper bound in Theorem A from an absolute constant $C$ to $o_{\epsilon}(1)$.

\end{flushleft}

\section{Acknowledgement}
We want to thank our advisor Professor Mark Rudelson for his advise and encouragement on solving this problem. And thank both Professor Mark Rudelson and Professor Vitali Milman for encouraging us to organize our result as this paper.


\begin{thebibliography}{99}
\bibitem{AGA}
S.~Artstein-Avidan, A.~A.~Giannopoulos, and V.~D.~Milman, Asymptotic geometric analysis. Part I. Mathematical Surveys and Monographs, 202. American Mathematical Society, Providence, RI, 2015.

\bibitem{YG}
Y.~Gordon, Some inequalities for Gaussian processes and applications. Israel J. Math. 50 (1985), 265-289.

\bibitem{M}
V.~D.~Milman, New proof of the theorem of A. Dvoretzky on sections of convex bodies,
Funct. Anal. Appl. 5(4) (1971), 288-295.

\bibitem{D}
V.~D.~Milman, G.~Schechtman,  Asymptotic theory of finite-dimensional normed spaces. With an appendix by M.~Gromov. Lecture Notes in Mathematics, 1200. Springer-Verlag, Berlin, 1986.

\bibitem{MS}
V.~D.~Milman and G.~Schechtman, Global versus Local asymptotic theories of finite dimensional
normed spaces, Duke Math. Journal 90 (1997), 73-93.

\bibitem{GS}
G.~Schechtman, A remark concerning the dependence on $\epsilon$ in Dvoretzky's theorem.  In: "Geometric aspects of functional analysis (1987-88)," Lecture Notes in Math., 1376, pp. 274-277, Springer, Berlin (1989).



 \end{thebibliography}
\end{document}